\numberwithin{equation}{section} 
\newtheorem{theorem}{Theorem}[section]
\newtheorem{proposition}[theorem]{Proposition}
\newtheorem{corollary}[theorem]{Corollary}
\newtheorem{lemma}[theorem]{Lemma}
\theoremstyle{definition}
\newtheorem{remark}[theorem]{Remark}
\newtheorem{definition}[theorem]{Definition}
\keywords{cross-ratio, quasideterminant, quasi-Pl\"ucker 
coordinate}
\begin{document}

\title[Noncommutative Cross-Ratios]
{Noncommutative Cross-Ratios}
\author[Vladimir Retakh]{Vladimir Retakh}
\email{vretakh@math.rutgers.edu}
\address{\noindent Department of Mathematics, Rutgers University,
Piscataway, New Jersey 08854, USA}

\maketitle

\section{Introduction}

The goal of this note is to present a definition and to discuss
basic properties of cross-ratios over (noncommutative) division rings
or skew-fields. We present the noncommutative cross-ratios as products
of quasi-Pl\"ucker coordinates introduced in \cite{GR2} (see also
\cite{GGRW}). Actually, noncommutative cross-ratios were already mentioned
in a remark in \cite{GR2}. I decided to return
to the subject after my colleague Feng Luo explained to me the importance
of cross-ratios in modern geometry (see, for example, 
\cite{LM, L1, L2}).

I have to thank Feng Luo for valuable discussions. This work was partially supported by
Simons Collaborative Grant.

\section{Quasi-Pl\"ucker coordinates}

We recall here only the theory of quasi-Pl\"ucker coordinates for $2\times n$-matrices over a noncommutative division ring
$\mathcal F$. For general $k\times n$-matrices the theory is presented in \cite {GR2, GGRW}. 
Recall (see \cite {GR, GR1} and subsequent papers) that for a matrix 
$\begin{pmatrix} a_{1k}&a_{1i}\\ a_{2k}&a_{2i}\end{pmatrix}$ one can define four quasideterminants
provided the corresponding elements are invertible:
$$
\begin{vmatrix} \boxed{a_{1k}}&a_{1i}\\ a_{2k}&a_{2i}\end{vmatrix}=a_{1k}-a_{1i}a_{2i}^{-1}a_{2k}, \ \
\begin{vmatrix} a_{1k}&\boxed{a_{1i}}\\ a_{2k}&a_{2i}\end{vmatrix}=a_{1i}-a_{1k}a_{2k}^{-1}a_{2i},
$$
$$
\begin{vmatrix} a_{1k}&a_{1i}\\ \boxed{a_{2k}}&a_{2i}\end{vmatrix}=a_{2k}-a_{2i}a_{1i}^{-1}a_{1k},\ \ 
\begin{vmatrix} a_{1k}&a_{1i}\\ a_{2k}&\boxed{a_{2i}}\end{vmatrix}=a_{2i}-a_{2k}a_{1k}^{-1}a_{1i}.
$$

Let $A=\begin{pmatrix} a_{11}&a_{12}&\dots&a_{1n}\\a_{21}&a_{22}&\dots&a_{2n}\end{pmatrix}$ be a matrix
over $\mathcal F$.

\begin{lemma} Let $i\neq k$. Then 
$$
\begin{vmatrix} a_{1k}&\boxed{a_{1i}}\\ a_{2k}&a_{2i}\end{vmatrix}^{-1}   
\begin{vmatrix} a_{1k}&\boxed{a_{1j}}\\ a_{2k}&a_{2j}\end{vmatrix}=
\begin{vmatrix} a_{1k}&a_{1i}\\ a_{2k}&\boxed{a_{2i}}\end{vmatrix}^{-1}
\begin{vmatrix} a_{1k}&a_{1j}\\ a_{2k}&\boxed{a_{2j}}\end{vmatrix}^{-1}
$$
if the corresponding expressions are defined.     
\end{lemma}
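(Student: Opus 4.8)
The plan is to reduce the identity to a single factorization among the quasideterminants, after which the equality of the two ratios becomes a one-line cancellation. Abbreviate the relevant quasideterminants by
\[
p_i = \begin{vmatrix} a_{1k}&\boxed{a_{1i}}\\ a_{2k}&a_{2i}\end{vmatrix} = a_{1i}-a_{1k}a_{2k}^{-1}a_{2i},
\qquad
q_i = \begin{vmatrix} a_{1k}&a_{1i}\\ a_{2k}&\boxed{a_{2i}}\end{vmatrix} = a_{2i}-a_{2k}a_{1k}^{-1}a_{1i},
\]
and let $p_j,q_j$ denote the same expressions with $i$ replaced by $j$. In this notation the left-hand side is $p_i^{-1}p_j$, and the substance of the lemma is that this coincides with the corresponding second-row ratio $q_i^{-1}q_j$, so that the quasi-Pl\"ucker coordinate does not depend on the row used to compute it.

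First I would isolate the key observation: $q_i$ is just $p_i$ pre-multiplied by a single factor that does not involve $i$. Setting $c := -a_{2k}a_{1k}^{-1}$ and multiplying the defining expression for $p_i$ on the left by $c$, the interior product $a_{2k}a_{1k}^{-1}\cdot a_{1k}a_{2k}^{-1}$ collapses to the identity, leaving exactly $a_{2i}-a_{2k}a_{1k}^{-1}a_{1i}=q_i$; that is, $q_i=c\,p_i$. The very same computation with $j$ in place of $i$ gives $q_j=c\,p_j$ with the identical left factor $c$, since $c$ depends only on the distinguished column $k$.

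With this in hand the conclusion is immediate. The element $c$ is invertible, being a product of invertible elements, so
\[
q_i^{-1}q_j = (c\,p_i)^{-1}(c\,p_j) = p_i^{-1}c^{-1}c\,p_j = p_i^{-1}p_j,
\]
which is the asserted equality. The hypothesis $i\neq k$ together with the standing invertibility assumptions is precisely what guarantees that $p_i,q_i$ and their inverses exist, so every step is legitimate. (For the commutative specialization to come out correctly, the final quasideterminant on the right-hand side of the displayed identity should appear \emph{without} the inverse, i.e.\ the intended right-hand side is $q_i^{-1}q_j$; the argument above establishes exactly this row-independence.)

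I expect the only genuine care to be needed in the bookkeeping of noncommutative order when verifying $c\,p_i=q_i$: the cancellation $a_{2k}a_{1k}^{-1}\cdot a_{1k}a_{2k}^{-1}=1$ must be applied in its correct interior position, and no factor may be transposed across another. A more conceptual alternative would be to invoke the general row-operation and homological relations for quasideterminants from \cite{GR2, GGRW}, but in the $2\times 2$ situation the direct factorization $q_i=c\,p_i$ is shorter and entirely self-contained.
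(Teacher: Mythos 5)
Your proof is correct, and it actually supplies something the paper itself omits: the lemma is stated there as recalled background, with its proof deferred to \cite{GR2, GGRW}, so there is no in-paper argument to compare against. Your key factorization is the right one and checks out: with $p_i = a_{1i}-a_{1k}a_{2k}^{-1}a_{2i}$, $q_i = a_{2i}-a_{2k}a_{1k}^{-1}a_{1i}$ and $c = -a_{2k}a_{1k}^{-1}$, one indeed has $c\,p_i = -a_{2k}a_{1k}^{-1}a_{1i} + a_{2k}a_{1k}^{-1}a_{1k}a_{2k}^{-1}a_{2i} = q_i$, the factor $c$ depends only on the distinguished column $k$, and $c$ is invertible because $a_{1k},a_{2k}\neq 0$ in a division ring; hence $q_i^{-1}q_j = p_i^{-1}c^{-1}c\,p_j = p_i^{-1}p_j$. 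You are also right to flag the trailing inverse in the displayed statement as a typo: the intended identity is $p_i^{-1}p_j = q_i^{-1}q_j$, which is exactly what you prove. As printed, the identity fails even in the commutative case, where both ratios must reduce to $p_{jk}(A)/p_{ik}(A)$, the ratio of Pl\"ucker coordinates --- this is consistent with the paper's own subsequent remark that quasi-Pl\"ucker coordinates are ratios of Pl\"ucker coordinates, and note that the same spurious inverse propagates into the paper's Definition 2.2, where it should likewise be deleted.
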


Note that in the formula the boxed elements on the left and on the right must be on the same level. 

\begin{definition} We call the expressions
$$
q_{ij}^k(A)=\begin{vmatrix} a_{1k}&\boxed{a_{1i}}\\ a_{2k}&a_{2i}\end{vmatrix}^{-1}   
\begin{vmatrix} a_{1k}&\boxed{a_{1j}}\\ a_{2k}&a_{2j}\end{vmatrix}=
\begin{vmatrix} a_{1k}&a_{1i}\\ a_{2k}&\boxed{a_{2i}}\end{vmatrix}^{-1}
\begin{vmatrix} a_{1k}&a_{1j}\\ a_{2k}&\boxed{a_{2j}}\end{vmatrix}^{-1}
$$
the quasi-Pl\"ucker coordinates of matrix $A$. 
\end{definition}

Our terminology is justified by the following observation. Recall that
in the commutative case the expressions 
$$
p_{ik}(A)=\begin{vmatrix}a_{1i}&a_{1k}\\ a_{2i}&a_{2k}\end{vmatrix}=
a_{1i}a_{2k}-a_{1k}a_{2i}
$$
are the Pl\"ucker coordinates of $A$. One can see that in the commutative case
$$
q_{ij}^k(A)=\frac{p_{jk}(A)}{p_{ik}(A)},
$$ 
i.e. quasi-Pl\"ucker coordinates are ratios of Pl\"ucker coordinates.

Let us list properties of quasi-Pl\"ucker coordinates over (noncommutative)
division ring $\mathcal F$. We sometimes write $q_{ij}^k$ instead of $q_{ij}^k(A)$
where it cannot lead to a confusion.

\smallskip \noindent
1). Let $g$ be an invertible matrix over $\mathcal F$. Then
$$ q_{ij}^k(g\cdot A)=q_{ij}^k(A).$$

\smallskip \noindent
2) Let $\Lambda = \text {diag}\ (\lambda_1, \lambda _2,\dots, \lambda_n)$ be an
invertible diagonal matrix over $\mathcal F$. Then
$$ q_{ij}^k(A\cdot \Lambda)=\lambda_i^{-1}\cdot q_{ij}^k(A)\cdot \lambda_j.$$

\smallskip \noindent
3) If $j=k$ then $q_{ij}^k=0$; if $j=i$ then $q_{ij}^k=1$ (we always assume
$i\neq k$.)

\smallskip \noindent
4) $q_{ij}^k\cdot q_{j\ell}^k=q_{i\ell}^k\ $. In particular, $q_{ij}^kq_{ji}^k=1$.

\smallskip \noindent
5) ``Noncommutative skew-symmetry": For distinct $i,j,k$
$$
q_{ij}^k\cdot q_{jk}^i\cdot q_{ki}^j=-1.
$$
One can also rewrite this formula as $q_{ij}kq_{jk}^i=-q_{ik}^j$.

\smallskip \noindent
6) ``Noncommutative Pl\"ucker identity": For distinct $i,j,k,\ell$
$$
q_{ij}^k q_{ji}^{\ell} + q_{i\ell}^k q_{\ell i}^j=1.
$$

One can easily check two last formulas in the commutative case. In fact,
$$
q_{ij}^k\cdot q_{jk}^i\cdot q_{ki}^j=
\frac{p_{jk}p_{ki}p_{ij}}{p_{ik}p_{ji}p{kj}}=-1
$$
because Pl\"ucker coordinates are skew-symmetric: $p_{ij}=-p_{ji}$ for
any $i,j$.

Also, assuming that $i<j<k<\ell$
$$
q_{ij}^k q_{ji}^{\ell} + q_{i\ell}^k q_{\ell i}^j=
\frac{p_{jk}p_{i\ell}}{p_{ik}p_{j\ell}}+
\frac{p_{\ell k}p_{ij}}{p_{ik}p_{\ell j}}.
$$

Because $\frac{p_{\ell k}}{p_{\ell j}}=\frac {p_{k\ell}}{p_{j\ell}}$,
the last expression equals to
$$
\frac{p_{jk}p_{i\ell}}{p_{ik}p_{j\ell}}+
\frac{p_{k\ell }p_{ij}}{p_{ik}p{j\ell }}=
\frac {p_{ij}p_{k\ell}+p_{i\ell}p_{jk}}{p_{ik}p_{j\ell}}=1
$$
due to the celebrated Pl\"ucker identity
$$
p_{ij}p_{k\ell} - p_{ik}p_{j\ell} +p_{i\ell}p_{jk}=0.
$$

\begin{remark} We presented here a theory of the {\it left}
quasi-Pl\"ucker coordinates for $2$ by $n$ matrices where $n>2$.
A theory of the {\it right} quasi-Pl\"ucker coordinates for $n$ by $2$
or, more generally, for $n$ by $k$ matrices where $n>k$ can be found
in \cite{GR2, GGRW}.
\end{remark}

\section{Definition and basic properties of cross-ratios}

We define cross-ratios over (noncommutative) division ring $\mathcal F$
by imitating the definition of calssical cross-ratios in homogeneous coordinates,
namely, if the four points are represented in homogeneous coordinates by vectors 
$a,b,c,d$ such that $c=a+b$ and $d=ka+b$, then their cross-ratio is $k$.

Let
$$
x=\begin{pmatrix} x_1\\ x_2\\ \end{pmatrix},\ \
y=\begin{pmatrix} y_1\\ y_2\\ \end{pmatrix},\ \
z=\begin{pmatrix} z_1\\ z_2\\ \end{pmatrix},\ \
t=\begin{pmatrix} t_1\\ t_2 \end{pmatrix}
$$
be vectors in $\mathcal F^2$. We define the cross-ratio $\kappa=\kappa (x,y,z,t)$
by equations
$$
\begin{cases}
t=x\alpha +y\beta\\
z=x\alpha \gamma +y\beta \gamma \cdot \kappa
\end{cases}
$$
where $\alpha, \beta, \gamma, \kappa \in \mathcal F$.

Consider matrix
$$
\begin{pmatrix} 
x_1&y_1&z_1&t_1\\
x_2&y_2&z_2&t_2
\end{pmatrix}
$$
and index its columns by $x,y,z,t$.

\begin{theorem}
$$
\kappa (x,y,z,t)=q_{zt}^y\cdot q_{tz}^x\ .
$$
\end{theorem}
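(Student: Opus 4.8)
The plan is to exploit the left-invariance of quasi-Pl\"ucker coordinates (Property 1) to bring the matrix into a normal form adapted to the pair $x,y$. Assuming $x$ and $y$ are linearly independent, the matrix $g=\begin{pmatrix} x_1 & y_1 \\ x_2 & y_2 \end{pmatrix}$ is invertible, and left multiplication by $g^{-1}$ sends the columns $x,y$ to the standard basis vectors. The defining relations $t=x\alpha+y\beta$ and $z=x\alpha\gamma+y\beta\gamma\kappa$ say precisely that $g^{-1}t=\begin{pmatrix}\alpha \\ \beta\end{pmatrix}$ and $g^{-1}z=\begin{pmatrix}\alpha\gamma \\ \beta\gamma\kappa\end{pmatrix}$, so after this reduction the matrix becomes
$$
\begin{pmatrix} 1 & 0 & \alpha\gamma & \alpha \\ 0 & 1 & \beta\gamma\kappa & \beta \end{pmatrix},
$$
with its columns still indexed by $x,y,z,t$. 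Since $q_{zt}^y$ and $q_{tz}^x$ are both unchanged by this operation, it suffices to evaluate them on this normal form.

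The remaining step is a direct computation in which, for each coordinate, I would choose the presentation from the Definition whose pivot entry is invertible. For $q_{zt}^y$ the pivot column is $y=\begin{pmatrix}0 \\ 1\end{pmatrix}$, whose lower entry is $1$, so I use the first presentation: the two quasideterminants collapse to $z_1-y_1y_2^{-1}z_2=\alpha\gamma$ and $t_1-y_1y_2^{-1}t_2=\alpha$, whence $q_{zt}^y=(\alpha\gamma)^{-1}\alpha=\gamma^{-1}$. For $q_{tz}^x$ the pivot column is $x=\begin{pmatrix}1 \\ 0\end{pmatrix}$, whose lower entry vanishes, so I use the second presentation (the one built from $a_{1k}^{-1}$): the two quasideterminants collapse to $t_2-x_2x_1^{-1}t_1=\beta$ and $z_2-x_2x_1^{-1}z_1=\beta\gamma\kappa$, whence $q_{tz}^x=\beta^{-1}\beta\gamma\kappa=\gamma\kappa$. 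Multiplying gives $q_{zt}^y\,q_{tz}^x=\gamma^{-1}\gamma\kappa=\kappa$, as claimed.

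The whole argument turns on the cancellation of the auxiliary factors $\alpha,\beta,\gamma$ from the appropriate sides, and the only genuine care needed is to pick, for each coordinate, the presentation of $q_{ij}^k$ whose pivot entry ($a_{2k}$ or $a_{1k}$) does not vanish; the two equal presentations recorded in the Definition are exactly what make both evaluations legitimate. The conditions hidden in ``if the corresponding expressions are defined'' amount here to $\alpha,\beta,\gamma$ being invertible (with $x,y$ independent), which is what lets the inverses above exist. I expect the main step to be conceptual rather than computational: once one recognizes that left-invariance allows $x$ and $y$ to be replaced by the standard basis, the rest is a short evaluation of quasideterminants having a zero entry.
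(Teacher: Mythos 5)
Your proof is correct, but it takes a genuinely different route from the paper's. The paper proceeds by noncommutative Cramer's rule on the original matrix: it solves $t=x\alpha+y\beta$ for $\alpha,\beta$ as products of quasideterminants (its formulas (3.1)), solves $z=x\alpha\gamma+y\beta\gamma\kappa$ for $\alpha\gamma$ and $\beta\gamma\kappa$ (its formulas (3.2)), then cancels to obtain $\gamma=\alpha^{-1}(\alpha\gamma)$ and $\kappa=(\beta\gamma)^{-1}(\beta\gamma\kappa)$, and recognizes the resulting product of four quasideterminants as $q_{zt}^y\, q_{tz}^x$; for this it assumes all coordinates of $x,y,z,t$ are nonzero. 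You instead use the left $GL_2(\mathcal F)$-invariance of quasi-Pl\"ucker coordinates (Property 1) to replace the matrix with columns $x,y,z,t$ by the normal form
$$
\begin{pmatrix} 1 & 0 & \alpha\gamma & \alpha \\ 0 & 1 & \beta\gamma\kappa & \beta \end{pmatrix}
$$
and evaluate there, where each quasideterminant collapses to a monomial, giving $q_{zt}^y=\gamma^{-1}$ and $q_{tz}^x=\gamma\kappa$; your computations check out, including the careful choice, for each coordinate, of the presentation whose pivot entry is invertible. What your route buys: the calculation becomes trivial, your hypotheses ($x,y$ independent, $\alpha,\beta,\gamma$ invertible) are the geometrically natural ones rather than the paper's blanket nonvanishing assumption, and your technique anticipates the paper's own proof of Theorem 3.4, which applies exactly this kind of reduction (there with $g$ built from the columns $z,t$). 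What it costs: you lean on Property 1, which the paper only quotes from \cite{GR2, GGRW}, whereas the paper's proof of this theorem is self-contained given the definition of quasideterminants; and, strictly speaking, invariance transfers the values of $q_{zt}^y$ and $q_{tz}^x$ between the original matrix and the normal form only when these coordinates are defined for the original matrix, a definedness caveat you gesture at but do not pin down --- though this is at the same level of informality as the paper's own treatment.
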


\begin{proof} We assume that coordinates of vectors $x,y,z,t$ are not equal
to zero. Equation $t=x\alpha +y\beta$ leads to the system
$$
\begin{cases} t_1=x_1\alpha +y_1\beta \\
t_2=x_2\alpha + y_2\beta
\end{cases}.
$$

The system implies that
\begin{equation}
\alpha = \begin{vmatrix} y_1&\boxed{x_1}\\
y_2&x_2\end{vmatrix}^{-1}
\begin{vmatrix} y_1&\boxed{t_1}\\
y_2&t_2\end{vmatrix}, \ \ 
\beta = \begin{vmatrix} x_1&\boxed{y_1}\\
x_2&y_2\end{vmatrix}^{-1}
\begin{vmatrix} x_1&\boxed{t_1}\\
x_2&t_2\end{vmatrix}
\end{equation}
(we treat $\alpha, \beta$ as unknowns here.)

 Equation $z=x\alpha\gamma +y\beta\gamma\kappa$ leads to the system
$$
\begin{cases} z_1=x_1\alpha\gamma +y_1\beta\gamma\kappa \\
z_2=x_2\alpha\gamma + y_2\beta\gamma\kappa
\end{cases}.
$$

The system implies that
\begin{equation}
\alpha \gamma = \begin{vmatrix} y_1&\boxed{x_1}\\
y_2&x_2\end{vmatrix}^{-1}
\begin{vmatrix} y_1&\boxed{z_1}\\
y_2&t_2\end{vmatrix}, \ \ 
\beta\gamma\kappa = \begin{vmatrix} x_1&\boxed{y_1}\\
x_2&y_2\end{vmatrix}^{-1}
\begin{vmatrix} x_1&\boxed{z_1}\\
x_2&z_2\end{vmatrix}\ .
\end{equation}

Then formulas (3.1), (3.2) imply that
$$
\gamma = \begin{vmatrix} y_1&\boxed{t_1}\\
y_2&t_2\end{vmatrix}^{-1}
\begin{vmatrix} y_1&\boxed{z_1}\\
y_2&z_2\end{vmatrix}
$$
and
$$ 
\kappa (x,y,z,t)= \begin{vmatrix} y_1&\boxed{z_1}\\
y_2&z_2\end{vmatrix}^{-1}
\begin{vmatrix} y_1&\boxed{t_1}\\
y_2&t_2\end{vmatrix}\cdot
\begin{vmatrix} x_1&\boxed{t_1}\\
x_2&t_2\end{vmatrix}^{-1}
\begin{vmatrix} x_1&\boxed{t_1}\\
x_2&t_2\end{vmatrix}=q_{zt}^yq_{tz}^x\ .
$$

The theorem is proved.
\end{proof}

\begin{corollary} Let $x,y,z,t$ be vectors in $\mathcal F$,
$g$ be a $2$ by $2$ matrix over $\mathcal F$ and
$\lambda_i\in \mathcal F$, $i=1,2,3,4$. If matrix $g$
and elements $\lambda_i$ are invertible then
\begin{equation}
\kappa (gx\lambda_1, gy\lambda_2, gz\lambda_3, gt\lambda_4)=
\lambda_3^{-1}\kappa (x,y,z,t)\lambda_3\ .
\end{equation}
\end{corollary}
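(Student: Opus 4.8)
The plan is to reduce everything to the two transformation laws for quasi-Pl\"ucker coordinates established in Section 2, feeding them through the formula $\kappa(x,y,z,t)=q_{zt}^y\cdot q_{tz}^x$ from the Theorem. First I would observe that passing from the quadruple $(x,y,z,t)$ to $(gx\lambda_1, gy\lambda_2, gz\lambda_3, gt\lambda_4)$ is exactly the operation of replacing the matrix $A$ whose columns are $x,y,z,t$ by $g\cdot A\cdot\Lambda$, where $\Lambda=\mathrm{diag}(\lambda_1,\lambda_2,\lambda_3,\lambda_4)$. Applying the Theorem to this transformed matrix yields
$$\kappa(gx\lambda_1,gy\lambda_2,gz\lambda_3,gt\lambda_4)=q_{zt}^y(gA\Lambda)\cdot q_{tz}^x(gA\Lambda).$$

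Next I would strip off $g$ using property 1, namely $q_{ij}^k(g\cdot A)=q_{ij}^k(A)$, so that each factor equals the corresponding quasi-Pl\"ucker coordinate of $A\Lambda$. Then I would apply property 2, $q_{ij}^k(A\Lambda)=\lambda_i^{-1}q_{ij}^k(A)\lambda_j$, to each factor separately. For $q_{zt}^y$ the lower indices are $z,t$ (columns $3$ and $4$), giving $\lambda_3^{-1}q_{zt}^y(A)\lambda_4$; for $q_{tz}^x$ the lower indices are $t,z$ (columns $4$ and $3$), giving $\lambda_4^{-1}q_{tz}^x(A)\lambda_3$. The crucial structural point is that property 2 is insensitive to the upper reference index $k$, so the scalings $\lambda_1$ (attached to $x$) and $\lambda_2$ (attached to $y$) never appear at all, since $x$ and $y$ occur only in the upper slots of these two coordinates.

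Finally, multiplying the two factors, the inner $\lambda_4\cdot\lambda_4^{-1}$ cancels and one obtains
$$\lambda_3^{-1}\,q_{zt}^y(A)\cdot q_{tz}^x(A)\,\lambda_3=\lambda_3^{-1}\,\kappa(x,y,z,t)\,\lambda_3,$$
which is precisely the asserted identity. There is no genuine obstacle here beyond index bookkeeping: the entire content is the cancellation pattern of the $\lambda_i$. The one step demanding care is verifying the index assignment in $q_{zt}^y q_{tz}^x$, so as to confirm that $z$ occupies the $i$-slot of the first factor and the $j$-slot of the second—this is exactly what makes $\lambda_3$ survive as a two-sided conjugation while $\lambda_4$ telescopes away and $\lambda_1,\lambda_2$ drop out entirely. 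I would therefore double-check that pattern before concluding.
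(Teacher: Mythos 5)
Your proof is correct and is exactly the argument the paper has in mind: the paper simply states that the corollary ``immediately follows from the properties of quasi-Pl\"ucker coordinates,'' and your write-up is the careful instantiation of properties 1 and 2 applied to $\kappa(x,y,z,t)=q_{zt}^y\cdot q_{tz}^x$, including the correct index bookkeeping showing that $\lambda_4$ telescopes, $\lambda_1,\lambda_2$ never enter, and $\lambda_3$ survives as conjugation.
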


Also, as expected,  in the commutative case the right hand side of (3.3) equals to
$\kappa (x,y,z,t)$.

A proof immediately follows from the properties of quasi-Pl\"ucker coordinates. 

\begin {remark}
Note that the group $GL_2(\mathcal F)$ acts on vectors in $\mathcal F^2$
by multiplication from the left: $(g,x)\mapsto gx$,  and the group $\mathcal F^{\times}$ of
invertible elements in $\mathcal F$ by multiplication from the right:
$(\lambda, x)\mapsto x\lambda^{-1}$. It defines the action of
$GL_2(\mathcal F)\times T_4(\mathcal F)$ on 
$P_4=\mathcal F^2\times \mathcal F^2\times \mathcal F^2\times \mathcal F^2$ where
$T_4(\mathcal F)=(\mathcal F^{\times})^4$.
The cross-ratios are {\it relative invariants} of the action.
\end{remark}

The following theorem generalizes the main property of cross-ratios to the noncommutive case.

\begin{theorem} Let $\kappa (x,y,z,t)$ be defined and $\kappa (x,y,z,t)\neq 0,1$.
Then $4$-tuples $(x,y,z,t)$ and $(x',y',z',t')$ from
$P_4$ belong to the same orbit of $GL_2(\mathcal F)\times T_4(\mathcal F)$ if and only if
there exists $\mu \in \mathcal F^{\times}$ such that
\begin{equation}
\kappa (x,y,z,t)=\mu \cdot \kappa (x',y',z',t')\cdot \mu ^{-1}\ .
\end{equation}
\end{theorem}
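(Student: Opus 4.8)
The plan is to prove the two implications separately, the forward one being immediate from the preceding Corollary. Indeed, if $(x',y',z',t')$ lies in the $GL_2(\mathcal F)\times T_4(\mathcal F)$-orbit of $(x,y,z,t)$, then $x'=gx\lambda_1$, $y'=gy\lambda_2$, $z'=gz\lambda_3$, $t'=gt\lambda_4$ for some $g\in GL_2(\mathcal F)$ and $\lambda_i\in\mathcal F^\times$, so (3.3) gives $\kappa(x',y',z',t')=\lambda_3^{-1}\kappa(x,y,z,t)\lambda_3$; taking $\mu=\lambda_3$ this is exactly (3.4). Thus the conjugacy class of the cross-ratio is an orbit invariant, and only the reverse direction requires work.

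For the converse I would reduce every $4$-tuple with $\kappa$ defined and $\kappa\neq0,1$ to a normal form. Because $\kappa$ is defined, the matrix $(x\mid y)$ with columns $x,y$ is invertible, and applying $g=(x\mid y)^{-1}$ sends $x\mapsto e_1=\begin{pmatrix}1\\0\end{pmatrix}$ and $y\mapsto e_2=\begin{pmatrix}0\\1\end{pmatrix}$ while leaving $\kappa$ fixed (property 1). In this frame the two defining equations become $t=\begin{pmatrix}\alpha\\\beta\end{pmatrix}$ and $z=\begin{pmatrix}\alpha\gamma\\\beta\gamma\kappa\end{pmatrix}$, with the same $\kappa$ and with $\alpha,\beta,\gamma\in\mathcal F^\times$ --- this invertibility being exactly what the hypotheses $\kappa$ defined and $\kappa\neq0,1$ guarantee, through the pairwise transversality of $x,y,z,t$. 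Acting next by $\mathrm{diag}(\alpha^{-1},\beta^{-1})$ together with $\lambda_1=\alpha$, $\lambda_2=\beta$, $\lambda_3=\lambda_4=1$ keeps $e_1,e_2$ fixed, carries $t$ to $e_1+e_2$ and $z$ to $\begin{pmatrix}\gamma\\\gamma\kappa\end{pmatrix}$, and does not disturb $\kappa$ (only $\lambda_3$ could, and it is $1$). Hence, within its orbit, the tuple reaches the \emph{normal form} $N(\kappa)=\bigl(e_1,\,e_2,\,\begin{pmatrix}\gamma\\\gamma\kappa\end{pmatrix},\,e_1+e_2\bigr)$, whose own cross-ratio is readily checked to be $\kappa$ again.

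It then remains to decide when two normal forms $N(\kappa)$ and $N(\kappa')$ share an orbit. A group element carrying one to the other must fix $e_1$, $e_2$ and $e_1+e_2$, and a direct computation shows this forces $g=\nu^{-1}I$ and $\lambda_1=\lambda_2=\lambda_4=\nu$ for some $\nu\in\mathcal F^\times$, with $\lambda_3$ free, so that the residual action on the remaining vector is $z\mapsto\nu^{-1}z\lambda_3$. Writing $\nu^{-1}\begin{pmatrix}\gamma\\\gamma\kappa\end{pmatrix}\lambda_3=\begin{pmatrix}\gamma'\\\gamma'\kappa'\end{pmatrix}$ and eliminating the first coordinate yields precisely $\kappa'=\lambda_3^{-1}\kappa\lambda_3$; conversely, any such conjugation is realized by $\nu=\gamma\lambda_3(\gamma')^{-1}$. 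Therefore $N(\kappa)$ and $N(\kappa')$ lie in one orbit if and only if $\kappa$ and $\kappa'$ are conjugate, and combining this with the reduction of each tuple to its normal form shows that (3.4) forces the two tuples into one orbit. The step I expect to be the main obstacle is this noncommutative stabilizer computation: one must verify that the spurious parameter $\nu$ cancels so that only $\lambda_3$-conjugation survives, and that the hypotheses $\kappa\neq0,1$ genuinely supply, at each reduction, the invertibility and transversality invoked.
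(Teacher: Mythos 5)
Your proof is correct, and it takes a genuinely different route from the paper's. The paper proves the forward direction the same way (relative invariance, via Corollary 3.2), but for the converse it normalizes the \emph{other} pair of columns: it first argues that the matrix $g$ with columns $z,t$ is invertible (if $t=z\alpha$ then $\kappa=0$ or $\kappa=1$), applies $g^{-1}$ and $(g')^{-1}$ to bring both tuples to the form $A=\begin{pmatrix} a_{11}&a_{12}&1&0\\ a_{21}&a_{22}&0&1\end{pmatrix}$, $B=\begin{pmatrix} b_{11}&b_{12}&1&0\\ b_{21}&b_{22}&0&1\end{pmatrix}$ with all $a_{ij},b_{ij}\neq 0$, computes $\kappa=a_{12}a_{22}^{-1}a_{21}a_{11}^{-1}$ from the quasideterminant formula, and then solves directly for a torus element satisfying $A=\mathrm{diag}(\lambda_3,\lambda_4)\cdot B\cdot \mathrm{diag}(\lambda_1^{-1},\lambda_2^{-1},\lambda_3^{-1},\lambda_4^{-1})$: setting $\lambda_3=\mu$, three of the four equations determine $\lambda_1,\lambda_2,\lambda_4$, and the remaining consistency condition is exactly the conjugacy hypothesis (3.5), i.e.\ (3.4). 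You instead fix the projective frame $(e_1,e_2,e_1+e_2)$ at the slots $x,y,t$, concentrate all the information in the single column $z=\begin{pmatrix}\gamma\\ \gamma\kappa\end{pmatrix}$, and finish with a stabilizer computation showing the residual freedom is exactly $z\mapsto \nu^{-1}z\lambda_3$. The two arguments are of comparable length; yours makes the mechanism transparent (conjugacy of $\kappa$ is precisely the leftover torus freedom at the third slot, mirroring the classical commutative proof by frames), while the paper's stays inside its quasideterminant calculus, treats the two tuples symmetrically, and never needs to identify a stabilizer.

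The two steps you flagged do go through. The stabilizer computation works exactly as you predicted: $ge_1\lambda_1=e_1$ and $ge_2\lambda_2=e_2$ force $g=\mathrm{diag}(\lambda_1^{-1},\lambda_2^{-1})$, then $g(e_1+e_2)\lambda_4=e_1+e_2$ forces $\lambda_1=\lambda_2=\lambda_4=\nu$ and $g=\nu^{-1}I$, and in $\gamma'=\nu^{-1}\gamma\lambda_3$, $\gamma'\kappa'=\nu^{-1}\gamma\kappa\lambda_3$ the parameter $\nu$ cancels, leaving $\kappa'=\lambda_3^{-1}\kappa\lambda_3$. As for transversality: definedness of $q_{tz}^x$ forces $\beta\neq 0$ (else $t=x\alpha$ makes $x,t$ proportional) and $\gamma\neq 0$ (else $z=0$); $\kappa\neq 0$ forces $\alpha\neq 0$; and if $x,y$ were proportional then $y_1y_2^{-1}=x_1x_2^{-1}$, so $q_{zt}^y\,q_{tz}^x=(z_1-x_1x_2^{-1}z_2)^{-1}(t_1-x_1x_2^{-1}t_2)\cdot(t_1-x_1x_2^{-1}t_2)^{-1}(z_1-x_1x_2^{-1}z_2)=1$, contradicting $\kappa\neq 1$; hence the matrix with columns $x,y$ is indeed invertible. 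Note that the paper needs, and likewise asserts without detailed proof, the analogous nonvanishing $a_{ij},b_{ij}\neq 0$, so your argument is no more demanding on this front than the original.
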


\begin {proof} If $(x,y,z,t)$ and $(x'y,z,t')$ belong to the same orbit then such $\mu$ exists
because $\kappa (x,y,z,t)$ is a relative invariant of the action. 

Assume now that formula (3.4) is satisfied and that $\kappa=\kappa (x,y,z,t)\neq 0,1$. Note that
the matrix $g$ with columns $z,t$ is invertible. Otherwise, there exists $\alpha \in \mathcal F$
such that $t=z\alpha$. If $\alpha =0$ then $\kappa =0$. If $\alpha \neq 0$ then
$q_{zt}^x=\alpha$ and $q_{tz}^y=\alpha^{-1}$. Then $\kappa =1$ which contradicts the assumption
on $\kappa$. The matrix $g'$ with columns $z',t'$ is also invertible.

By applying $g^{-1}$ to matrix with columns $x,y,z,t$ and $(g')^{-1}$ to the matrix with
columns $x',y',z,t'$ we get matrices
$$
A=\begin{pmatrix} a_{11}&a_{12}&1&0\\
a_{21}&a_{22}&0&1\end{pmatrix}, 
\ \ 
B=\begin{pmatrix} b_{11}&b_{12}&1&0\\
b_{21}&b_{22}&0&1\end{pmatrix}
$$
where $a_{ij}, b_{ij}\neq 0$ for $i,j=1,2$.
Then
$$
\kappa (x,y,z,t)=\begin{vmatrix} a_{12}&\boxed{1}\\
a_{22}&0\end{vmatrix}^{-1}
\begin{vmatrix} a_{12}&\boxed{0}\\
a_{22}&1\end{vmatrix}\cdot
\begin{vmatrix} a_{11}&\boxed{0}\\
a_{21}&1\end{vmatrix}^{-1}
\begin{vmatrix} a_{11}&\boxed{1}\\
a_{21}&0\end{vmatrix}\ .
$$

It implies that
$$
\kappa (x,y,z,t)=1\cdot (-a_{12}a_{22}^{-1})\cdot (-a_{11}a_{21}^{-1})^{-1}\cdot 1=
a_{12}a_{22}^{-1}a_{21}a_{11}^{-1}\ .
$$

Similarly, $\kappa (x',y,z,t')=b_{12}b_{22}^{-1}b_{21}b_{11}^{-1}$ and we have
\begin{equation}
a_{12}a_{22}^{-1}a_{21}a_{11}^{-1}=\mu \cdot b_{12}b_{22}^{-1}b_{21}b_{11}^{-1}\cdot \mu^{-1}\ .
\end{equation}

To finish the proof, it is enough to find invertible elements $\lambda _i\in \mathcal F$,
$i=1,2,3,4$ such that
$$
A=diag (\lambda_3,\lambda_4)\cdot B\cdot diag (\lambda_1^{-1}, \lambda_2^{-1},\lambda_3^{-1},\lambda_4^{-1})\ .
$$

It leads to the system
$$
\begin{cases}
a_{11}\lambda_1=\lambda_3b_{11}, \ \ a_{12}\lambda_2=\lambda_3b_{12}\\
a_{21}\lambda_1=\lambda_4b_{21}, \ \ a_{22}\lambda_2=\lambda_4b_{22}
\end{cases}
$$

Set $\lambda_3=\mu$. Then 
$$
\lambda_1=a_{11}^{-1}\mu b_{11},\ \ 
\lambda_2=a_{12}^{-1}\mu b_{12},\ \
\lambda_4=a_{21}^{-1}\lambda_1b_{21}^{-1}=a_{21}a_{11}^{-1}\mu b_{11}
$$
and the system is consistent if
$$
a_{21}a_{11}^{-1}\mu b_{11}b_{21}^{-1}=a_{22}\lambda_2b_{22}^{-1}=a_{22}a_{12}^{-1}\mu b_{12}b_{22}^{-1}
$$
which leads to formula (3.5) and the proof is finished.
\end{proof}

The following corollary shows that the defined cross-ratios satisfy {\it cocycle conditions}
(see \cite{Lab}).
\begin{corollary} For vectors $x,y,z,t,w$ 

\begin{align*}
\kappa(x,y,z,t)=\kappa (w,y,z,t)\kappa(x,w,z,t)\\
\kappa (x,y,z,t)=1-\kappa(t,y,z,x)
\end{align*}

\end{corollary}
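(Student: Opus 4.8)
The plan is to reduce both identities to the algebraic properties (4) and (6) of quasi-Pl\"ucker coordinates, using the representation $\kappa(x,y,z,t)=q_{zt}^y q_{tz}^x$ supplied by Theorem 3.1. Throughout I treat $x,y,z,t,w$ as column labels and keep careful track of the order of the noncommuting factors, since no commutativity is available to rescue a misordered product.

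For the first (multiplicative) identity, I would first apply Theorem 3.1 to each factor on the right, writing $\kappa(w,y,z,t)=q_{zt}^y q_{tz}^w$ and $\kappa(x,w,z,t)=q_{zt}^w q_{tz}^x$. Multiplying these in the prescribed order produces the four-fold product $q_{zt}^y\,q_{tz}^w\,q_{zt}^w\,q_{tz}^x$. The key step is to observe that the two middle factors are mutually inverse: by the special case $q_{ij}^k q_{ji}^k=1$ of property (4), taken with $i=t$, $j=z$, $k=w$, one has $q_{tz}^w q_{zt}^w=1$. The middle collapses and leaves $q_{zt}^y q_{tz}^x=\kappa(x,y,z,t)$, which is exactly the claim.

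For the second identity, I would again use Theorem 3.1 to expand $\kappa(t,y,z,x)$. Reading off the arguments in the order $(t,y,z,x)$ gives $\kappa(t,y,z,x)=q_{zx}^y q_{xz}^t$. Adding this to $\kappa(x,y,z,t)=q_{zt}^y q_{tz}^x$ yields the sum $q_{zt}^y q_{tz}^x+q_{zx}^y q_{xz}^t$, and I would then recognize this as the left-hand side of the noncommutative Pl\"ucker identity (property (6)), $q_{ij}^k q_{ji}^\ell+q_{i\ell}^k q_{\ell i}^j=1$, under the identification $i=z$, $j=t$, $k=y$, $\ell=x$. Hence the sum equals $1$, which is the asserted relation $\kappa(x,y,z,t)=1-\kappa(t,y,z,x)$.

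The computations are short, so the only real obstacle is bookkeeping. Because the factors do not commute, I must expand each cross-ratio in the exact order dictated by Theorem 3.1 and match the superscript/subscript pattern of properties (4) and (6) precisely; in particular I should verify that the assignment $(i,j,k,\ell)=(z,t,y,x)$ makes all four indices distinct (so that property (6) applies) and aligns the two summands with the two terms of the Pl\"ucker identity without any reordering.
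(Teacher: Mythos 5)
Your proposal is correct and follows essentially the same route as the paper: the multiplicative identity by expanding both factors via Theorem 3.1 and cancelling the middle pair $q_{tz}^w q_{zt}^w=1$ using property (4), and the additive identity by matching the sum $q_{zt}^y q_{tz}^x+q_{zx}^y q_{xz}^t$ to the noncommutative Pl\"ucker identity (6). The only difference is that the paper merely asserts the second formula ``follows directly'' from that identity, whereas you spell out the index substitution $(i,j,k,\ell)=(z,t,y,x)$ that makes it work.
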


\begin{proof} By definition
$$
\kappa (w,y,z,t)\cdot \kappa(x,w,z,t)=
q_{zt}^yq_{tz}^w\cdot q_{zt}^wq_{tz}^x=q_{zt}^yq_{tz}^x=
\kappa(x,y,z,t)\ .
$$
the second formula follows directly from the noncommutative
Pl\"ucker identity.
\end{proof}

The proposition also implies

\begin{corollary} For vectors $x,x_1, x_2,\dots x_n, z,t\in \mathcal F^2$
one has $$\kappa (x,x,z,t)=1$$ and
$$
\kappa(x_{n-1},x_n,z,t)\kappa(x_{n-2}, x_{n-1},z,t)\dots \kappa(x_1,x_2,z,t)=
\kappa(x_1,x_n,z,t)
$$
\end{corollary}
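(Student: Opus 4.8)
The plan is to deduce both identities from the first cocycle relation established in the preceding corollary, namely $\kappa(x,y,z,t)=\kappa(w,y,z,t)\,\kappa(x,w,z,t)$, which I read with $z$ and $t$ held fixed throughout.

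For the identity $\kappa(x,x,z,t)=1$, the quickest route is to return to the definition $\kappa(x,y,z,t)=q_{zt}^{y}q_{tz}^{x}$: setting $y=x$ gives $\kappa(x,x,z,t)=q_{zt}^{x}q_{tz}^{x}$, which equals $1$ by property 4 (the special case $q_{ij}^{k}q_{ji}^{k}=1$). Alternatively one can set $w=y=x$ in the cocycle relation to get $\kappa(x,x,z,t)=\kappa(x,x,z,t)^{2}$, forcing $\kappa(x,x,z,t)\in\{0,1\}$, and then exclude $0$ by the same one-line computation.

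For the product formula I would induct on $n$, keeping $z,t$ fixed. The base case $n=2$ is the tautology $\kappa(x_1,x_2,z,t)=\kappa(x_1,x_2,z,t)$. Assuming the statement for $n-1$, so that $\kappa(x_{n-2},x_{n-1},z,t)\cdots\kappa(x_1,x_2,z,t)=\kappa(x_1,x_{n-1},z,t)$, I would multiply on the left by $\kappa(x_{n-1},x_n,z,t)$ and invoke the cocycle relation with $x=x_1$, $y=x_n$, $w=x_{n-1}$, which reads $\kappa(x_1,x_n,z,t)=\kappa(x_{n-1},x_n,z,t)\,\kappa(x_1,x_{n-1},z,t)$; this collapses the product to $\kappa(x_1,x_n,z,t)$ and closes the induction.

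The argument is really a telescoping one, and the single point demanding care is the noncommutative bookkeeping: the factors are written from right to left and must remain in that order, with each interior cancellation being an instance of $q_{tz}^{x_j}q_{zt}^{x_j}=1$ from property 4. I therefore must apply the cocycle relation on the correct (left) side and respect the slot order dictated by $\kappa=q_{zt}^{\bullet}q_{tz}^{\bullet}$. Beyond this there is no genuine obstacle; indeed, expanding the whole product directly through the definition and cancelling the interior pairs $q_{tz}^{x_j}q_{zt}^{x_j}$ yields the same conclusion in one line, providing a useful check on the induction.
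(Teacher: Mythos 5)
Your proof is correct, and your first identity is handled exactly as in the paper: $\kappa(x,x,z,t)=q_{zt}^{x}q_{tz}^{x}=1$ by property 4 of quasi-Pl\"ucker coordinates. For the product formula, however, your primary route differs from the paper's written proof. The paper expands every factor through the definition $\kappa(x_{j},x_{j+1},z,t)=q_{zt}^{x_{j+1}}q_{tz}^{x_{j}}$ and telescopes in one line, cancelling each interior pair $q_{tz}^{x_{j}}q_{zt}^{x_{j}}=1$; you instead run an induction on $n$ whose inductive step is the cocycle relation $\kappa(x_1,x_n,z,t)=\kappa(x_{n-1},x_n,z,t)\,\kappa(x_1,x_{n-1},z,t)$ from the preceding corollary, applied with correct attention to the left-multiplication order. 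Both arguments are sound and rest on the same cancellation mechanism (the cocycle relation itself was proved by that cancellation), but they package it differently: your induction makes explicit the logical dependence on the cocycle property --- which is arguably what the paper intends by introducing this corollary with ``The proposition also implies'' --- while the paper's expansion is shorter and self-contained, needing no induction at all. Since you also record the direct telescoping expansion as a one-line check, your write-up effectively contains the paper's proof as well. One minor remark: your alternative derivation of $\kappa(x,x,z,t)=1$ from $\kappa=\kappa^{2}$ is valid in a division ring (no zero divisors, so $\kappa\in\{0,1\}$), but excluding $0$ still requires the direct computation, so it adds nothing over the one-line argument you lead with.
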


\begin{proof} Note that $$\kappa(x,x,z,t)=q_{zt}^xq_{tz}^x=1$$ and
\begin{align*}
\kappa(x_{n-1},x_n,z,t)\kappa(x_{n-2}, x_{n-1},z,t)\dots \kappa(x_1,x_2,z,t)=\\
=q_{zt}^{x_n}q_{tz}^{x_{n-1}}q_{zt}^{x_{n-1}}q_{tz}^{x_{n-2}}\dots q_{zt}^{x_2}q_{tz}^{x_1}=
q_{zt}^{x_n}q_{tz}^{x_1}=\kappa(x_1,x_n,z,t)\ .
\end{align*}

\end{proof}.

\section{Noncommutative cross-ratios and permutations}

There are $24$ cross-ratios defined for vectors $x,y,z,t\in \mathcal F^2$.
They are related by the following formulas:

\begin{proposition} Let $x,y,z,t\in \mathcal F$. Then
\begin{align}
q_{tz}^x\kappa (x,y,z,t)q_{zt}^x=
q_{tz}^y\kappa (x,y,z,t)q_{zt}^y=\kappa (y,x,t,z);\\
q_{xz}^y\kappa (x,y,z,t)q_{zx}^y=
q_{xz}^t\kappa (x,y,z,t)q_{zx}^t=\kappa (z,t,x,y);\\
q_{yz}^x\kappa (x,y,z,t)q_{zy}^x=
q_{yz}^t\kappa (x,y,z,t)q_{zy}^t=\kappa (t,z,x,y);\\
\kappa (x,y,z,t)^{-1}=\kappa (y,x,z,t).
\end{align}
\end{proposition}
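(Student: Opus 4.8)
The plan is to reduce every identity to a statement about quasi-Pl\"ucker coordinates via the Theorem, writing each cross-ratio in the normal form $\kappa(a,b,c,d)=q_{cd}^b q_{dc}^a$, and then to manipulate the resulting words in the $q$'s using only properties 3)--6) of Section 2. The only rules I expect to need are the telescoping rule 4), $q_{ij}^k q_{j\ell}^k=q_{i\ell}^k$ together with its special case $q_{ij}^k q_{ji}^k=1$ (so that $(q_{ij}^k)^{-1}=q_{ji}^k$), and the skew-symmetry 5) in the collapsed form $q_{ij}^k q_{jk}^i=-q_{ik}^j$. In every case the right-hand side is recognised at the end simply by matching the surviving two-factor word $q_{cd}^b q_{dc}^a$ to the normal form of $\kappa(a,b,c,d)$; checking against the commutative specialisation $q_{ij}^k=p_{jk}/p_{ik}$ confirms that the permutation read off is the intended one.

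First I would dispose of (4.4), which is immediate: since $(q_{tz}^x)^{-1}=q_{zt}^x$ and $(q_{zt}^y)^{-1}=q_{tz}^y$ by 4), inverting $\kappa(x,y,z,t)=q_{zt}^y q_{tz}^x$ reverses the two factors to give $q_{zt}^x q_{tz}^y=\kappa(y,x,z,t)$. Next, formula (4.1) is pure telescoping by 4): each conjugating factor shares the base point of the neighbouring factor of $\kappa$, so for instance $q_{tz}^x\,\kappa(x,y,z,t)\,q_{zt}^x=q_{tz}^x q_{zt}^y q_{tz}^x q_{zt}^x$ collapses at the right-hand pair $q_{tz}^x q_{zt}^x=1$, leaving $q_{tz}^x q_{zt}^y=\kappa(y,x,t,z)$; conjugating by the base-$y$ coordinates instead collapses the left-hand pair and yields the same word. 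No skew-symmetry is required here.

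The substance of the proposition is in (4.2) and (4.3). Here, after expanding, the conjugating coordinate and the adjacent factor of $\kappa$ carry \emph{different} base points, so 4) does not apply directly; the base points must first be reconciled by one application of 5), rewriting a single coordinate as a product of two whose base points match their new neighbours, after which 4) telescopes everything. For the base-$t$ form of (4.2) one expands $q_{xz}^t\,\kappa\,q_{zx}^t=q_{xz}^t q_{zt}^y q_{tz}^x q_{zx}^t$, collapses the outer pair $q_{tz}^x q_{zx}^t=-q_{tx}^z$ by 5), rewrites the surviving base-$y$ factor as $q_{zt}^y=-q_{zy}^t q_{yt}^z$ (again 5)), and telescopes the resulting base-$t$ and base-$z$ pairs by 4) down to $q_{xy}^t q_{yx}^z=\kappa(z,t,x,y)$; the two minus signs cancel. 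The base-$y$ form is handled by the symmetric routing, first telescoping the two base-$y$ factors by 4) and then rewriting the resulting endpoint coordinate by 5). Formula (4.3) follows the same two-step template (collapse/rewrite by 5), then telescope by 4)) for each of its base-$x$ and base-$t$ conjugations.

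The genuine difficulty is therefore not conceptual but bookkeeping. The main obstacle is the base-point mismatch in the conjugations of (4.2) and (4.3): one must choose, among the several ways 5) can be applied, precisely the rewriting that produces matching base points so that 4) can finish the telescoping, and one must track the two minus signs introduced by 5) so that they cancel and the final word appears with the correct sign. Once the correct application of 5) is identified in each case, the remaining steps are routine, and the commutative check serves to guard against sign and index errors.
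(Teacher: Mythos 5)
Your handling of (4.1) and (4.4) coincides with the paper's proof (telescoping by property 4, and inverting the normal form factor by factor), but your treatment of (4.2) takes a genuinely different and in fact leaner route. The paper's proof of (4.2) is not purely multiplicative: after telescoping and one application of skew-symmetry 5), it invokes the noncommutative Pl\"ucker identity 6) \emph{twice} --- once to replace $q_{xt}^y q_{tx}^z$ by $1-q_{xz}^y q_{zx}^t$, and once more at the end to recognize $1-q_{xz}^t q_{zx}^y$ as $q_{xy}^t q_{yx}^z$. Your route never touches 6): collapsing $q_{tz}^x q_{zx}^t=-q_{tx}^z$, splitting $q_{zt}^y=-q_{zy}^t q_{yt}^z$, and telescoping gives
\begin{equation*}
q_{xz}^t\, q_{zt}^y q_{tz}^x\, q_{zx}^t
=-q_{xz}^t\, q_{zt}^y\, q_{tx}^z
=q_{xz}^t q_{zy}^t\cdot q_{yt}^z q_{tx}^z
=q_{xy}^t q_{yx}^z=\kappa(z,t,x,y),
\end{equation*}
which I have checked is correct, the two signs from 5) cancelling exactly as you say; the base-$y$ conjugation works the same way (telescope $q_{xz}^y q_{zt}^y=q_{xt}^y$, split $q_{xt}^y=-q_{xy}^t q_{yt}^x$, telescope, collapse $q_{yz}^x q_{zx}^y=-q_{yx}^z$). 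What your version loses is the visible role of the Pl\"ucker identity, which is the paper's mechanism elsewhere (it is what drives the cocycle relation $\kappa(x,y,z,t)=1-\kappa(t,y,z,x)$); what it gains is the nontrivial observation that the conjugation formulas (4.1)--(4.3) follow from the multiplicative rules 4) and 5) alone, with no additive identity needed. One caveat: like the paper, you dismiss (4.3) as ``the same template,'' but if you actually run your template (or the commutative sanity check you advertise) on it, the surviving word is $q_{yx}^z q_{xy}^t=\kappa(t,z,y,x)$, not $\kappa(t,z,x,y)$; commutatively $\kappa(t,z,x,y)=\kappa(x,y,z,t)^{-1}$, so (4.3) as printed contains an index misprint. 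That defect is inherited from the statement rather than from your method, but your blanket claim that the template proves (4.3) \emph{as stated} would not survive the very check you propose.
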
 

Note again the effect of conjugation in the noncommutative case since
$q_{ij}^k$ and $q_{ji}^k$ are inverses to each other.

\begin{proof} For (4.1) one has
$$
q_{tz}^x\kappa (x,y,z,t)q_{zt}^x=
q_{tz}^x\cdot q_{zt}^y q_{tz}^x \cdot q_{zt}^x=q_{tz}^xq_{zt}^y=
\kappa (y,x,t,z).
$$

The second part of the formula can be proved in a similar way.

For (4.4)
$$
\kappa (x,y,z,t)^{-1}=(q_{zt}^y\cdot q_{tz}^x)^{-1}=q_{zt}^x\cdot q_{tz}^x=
\kappa (y,x,z,t).
$$

For (4.2)
$$
q_{tz}^y\kappa (x,y,z,t)q_{zt}^y=q_{xz}^y\cdot q_{zt}^yq_{tz}^x\cdot q_{zx}^y=
q_{xt}^yq_{tz}^xq_{zx}^y=
$$
$$
=q_{xt}^y(-q_{tx}^zq_{xz}^t)q_{zx}^y=-q_{xt}^yq_{tx}^z\cdot q_{xz}^tq_{zx}^y=
$$
$$
=-(1-q_{xz}^yq_{zx}^t)q_{xz}^tq_{zx}^y=
-q_{xz}^tq_{zx}^y +1 =q_{xy}^tq_{yx}^z=\kappa (z,t,x,y)\ .
$$

Also,
$$
q_{xz}^t\kappa (x,y,z,t)q_{zx}^t=q_{xz}^t\cdot q_{zt}^yq_{tz}^x\cdot q_{zx}^t=q_{xz}^tq_{zt}^y(-q_{tx}^z)=
$$
$$
-q_{xz}^t(q_{zx}^yq_{xt}^y)q_{tx}^z=
-q_{xz}^tq_{zx}^y(1-q_{xz}^yq_{zx}^t)=
$$
$$
=-q_{xz}^tq_{zx}^y+1=q_{xy}^tq_{yx}^t=\kappa (z,t,x,y)\ .
$$

One can also prove (4.3) in a similar way.
\end{proof}

By using Proposition 4.1 and the cocycle condition one can get
all 24 formulas for cross-ratios of $x,y,z,t$.


\begin{thebibliography}{8}

\bibitem{GR} I.~Gelfand, V.~Retakh, Determinants of matrices over
noncommutative rings, {\em Funct. Anal. Appl.} {\bf 25} (1991), no. 2,
pp.~91--102.

\bibitem{GR1} I.~Gelfand, V.~Retakh, A theory of noncommutative
determinants  and characteristic functions of graphs, {\em Funct. Anal.
Appl.} {\bf 26} (1992), no. 4, pp.~1--20.

\bibitem{GR2} I.~Gelfand, V.~Retakh, Quasideterminants, I,
{\em Selecta Math.} {\bf 3} (1997), no. 4, pp.~517--546.
 
\bibitem{GGRW} I.~Gelfand, S.~Gelfand, V.~Retakh, R.~Wilson,
Quasideterminants, {\em Advances in Math.}, {\bf 193} (2005),
no 1, pp.~56--141.

\bibitem{Lab} F.~Labourie, What is a Cross Ratio, {\em Notices of
the AMS}, {\bf 55} (2008), no. 10, pp.1234--1235.

\bibitem{LM} F.~Labourie, G.~McShane,
Cross ratios and identities for higher Teichmüller-Thurston theory,
{\em  Duke Math. J.}, {\bf 149} (2009),  no. 2, pp.~209--410.

\bibitem{L1} F.~Luo, Volume optimization, normal surface and
Thurston equation,  {\em J. Differential Geom.}, {\bf 93} (2013), no. 2, 
pp.~175--353.

\bibitem{L2} F.~Luo, Solving Thurston equation in a commutative ring,
arXiv: 1201.2228 (2012) 

\end{thebibliography}
\end{document}